\numberwithin{equation}{section}
\theoremstyle{plain}%
\newtheorem{theorem}{Theorem}
\numberwithin{theorem}{section}
\newtheorem{proposition}[theorem]{Proposition}
\newtheorem{example}[theorem]{Example}
\newtheorem{lemma}[theorem]{Lemma}
\newtheorem{definition}[theorem]{Definition}
\newtheorem{remark}[theorem]{Remark}
\newcommand{\Z}{\mathbb{Z}}
\begin{document}

\title{Singular Vectors of \\ Orthogonally Decomposable Tensors}
\author{Elina Robeva and Anna Seigal}

\date{ }
\maketitle

\begin{abstract}
Orthogonal decomposition of tensors is a generalization of the singular value decomposition of matrices. In this paper, we study the spectral theory of orthogonally decomposable tensors. For such a tensor, we give a description of its singular vector tuples as a variety in a product of projective spaces.
\end{abstract}

\section{Introduction}
The singular value decomposition of a matrix $M\in\mathbb R^{n_1} \otimes\mathbb R^{n_2}$ expresses it in the form
\begin{align}\label{svd}
M = V^{(1)}\Sigma (V^{(2)})^T = \sum_{i=1}^n \sigma_i v^{(1)}_i \otimes v^{(2)}_i,
\end{align}
where $V^{(1)}\in\mathbb R^{n_1}\otimes \mathbb R^{ n_1}$ and $V^{(2)}\in\mathbb R^{n_2} \otimes\mathbb R^{ n_2}$ are orthogonal matrices. The vectors $v^{(1)}_1,\ldots,v^{(1)}_{n}$ and $v^{(2)}_1,\ldots,v^{(2)}_{n}$ are the columns of the matrices $V^{(1)}$ and $V^{(2)}$ respectively. The matrix $\Sigma$ is diagonal of size $n_1\times n_2$ with non-negative diagonal entries $\sigma_1,...,\sigma_n$, where $n = \min\{n_1,n_2\}$. The singular value decomposition of a matrix is extremely useful for studying matrix-shaped data coming from applications. For example, it allows the best low-rank approximation of a matrix to be found.

In light of the excellent properties of the singular value decomposition, and of the prevalence of tensor data coming from applications, it is a topic of major interest to extend the singular value decomposition to tensors. In fact it is even more crucial to find a low rank approximation of a tensor than it is for a matrix: the greater number of dimensions makes tensors in their original form especially computationally intractable. In this paper we investigate those tensors for which the singular value decomposition is possible. We note that our singular value decomposition is more stringent than that in \cite{LMV}, which is based on flattenings of the tensor.

\begin{definition} A tensor $T\in \mathbb R^{n_1}\otimes \mathbb R^{n_2}\otimes \cdots \otimes \mathbb R^{n_d}$ is {\em orthogonally decomposable}, or {\em odeco}, if it can be written as
$$T = \sum_{i=1}^n \sigma_i v^{(1)}_{i}\otimes v^{(2)}_{i}\otimes \cdots \otimes v^{(d)}_{i},$$
where $n = \min\{n_1,\dots, n_d\}$, the scalars $\sigma_i \in \mathbb R$, and the vectors $v^{(j)}_{1}, v^{(j)}_{2}, \ldots, v^{(j)}_{n} \in \mathbb R^{n_j}$ are orthonormal for every fixed $j\in\{1,\ldots,d\}$.
\end{definition}

We remark that in the above decomposition for $T$ it is sufficient to sum up to $n=\min\{n_1,\dots, n_d\}$ since there are at most $n_j$ orthonormal vectors in $\mathbb R^{n_j}$ for every $j=1,\dots, d$. Such a decomposition will in general be unique up to re-ordering the summands. 

Odeco tensors have been studied in the past due to their appealing properties \cite{AGHKT, AGJ, K03, K01, R, ZG}. Finding the decomposition of a general tensor is NP-hard \cite{HL}, however finding the decomposition of an odeco tensor can be done efficiently via an alternating tensor power method \cite{ZG}.

The variety of odeco tensors was studied in \cite{BDHR}, and the eigenvectors of symmetric odeco tensors of format $n\times\cdots\times n$ were studied in \cite{Robeva}. Here we focus on odeco tensors of format $n_1\times\cdots\times n_d$ that need not be symmetric, and whose dimensions $n_i$ need not be equal. As with matrices, when the dimensions $n_i$ are not equal, it is no longer possible to define eigenvectors. The right notion is now that of a singular vector tuple.


\begin{definition} 
A {\em singular vector tuple} of a tensor $T\in\mathbb R^{n_1}\otimes\cdots\otimes \mathbb R^{n_d}$ is a $d$-tuple of nonzero vectors $(x^{(1)}, \dots, x^{(d)}) \in \mathbb C^{n_1 } \times \cdots \times \mathbb C^{n_d }$ such that
\begin{align}\label{contract}
T( x^{(1)}, \ldots, x^{(j-1)}, \cdot, x^{(j+1)}, \ldots, x^{(d)}) \text{ is parallel to } x^{(j)} \text{, for all} \text{ } j = 1 , \ldots, d
\end{align}
The left hand side of equation \eqref{contract} is the vector obtained by contracting $T$ by the vector $x^{(k)}$ along its $k$-th dimension for all $k\neq j$.
\end{definition}

Since this setup is invariant under scaling each vector $x^{(j)}$, we consider the singular vector tuple $(x^{(1)}, \dots, x^{(d)})$ to lie in the product of projective spaces $\mathbb P^{n_1-1}\times\cdots\times\mathbb P^{n_d-1}$.

The singular vector tuples of a tensor can also be characterized via a variational approach, as in \cite{LHL}. They are the critical points of the optimization problem
\begin{align*}
\text{maximize}& \quad T(x^{(1)}, \dots, x^{(d)})\\
\text{subject to}& \quad ||x^{(1)}|| = \cdots = ||x^{(d)}|| = 1,
\end{align*}
where we note that the global maximizer gives the best rank-one approximation of the tensor.

Given a decomposition of an odeco tensor $T = \sum_{i=1}^n\sigma_i v^{(1)}_i\otimes\cdots \otimes v^{(d)}_{i}$, it is straightforward to see that the tuples $(v^{(1)}_{i}, \ldots, v^{(d)}_{i})$ corresponding to the rank-one tensors in the decomposition are singular vector tuples. For generic matrices $M \in \mathbb{R}^{n_1}\otimes\mathbb R^{n_2}$ the rank-one terms in the singular value decomposition constitute all of the singular vector pairs. In contrast, odeco tensors have additional singular vector tuples that do not appear as terms in the decomposition.

\begin{remark} We distinguish between the cases $T(x^{(1)}, \dots, x^{(d)}) = 0$ and $T(x^{(1)}, \dots, x^{(d)}) \neq 0$. This is equivalent to whether or not the vector in \eqref{contract} is zero. In the former case, the singular vector tuple is a {\em base point} of the maps of projective space $\mathbb P^{n_1-1}\times\cdots\times\mathbb P^{n_{j-1}-1}\times \mathbb P^{n_{j+1}-1}\times\cdots\times\mathbb P^{n_d-1} \to \mathbb P^{n_j-1}$ induced by $T$ for all $j=1,\dots, d$. In the latter case the singular vector tuple is a {\em fixed point} of each of these maps.\end{remark}

Our main theorem is the following description of the complex singular vector tuples of an odeco tensor: 

\begin{theorem}\label{thm:main} 
The projective variety of singular vector tuples of an odeco tensor $T\in \mathbb R^{n_1}\otimes \mathbb R^{n_2}\otimes \cdots \otimes \mathbb R^{n_d}$ consists of
\[ \frac{{(2^{d-1}(d-2) + 1)}^n - 1}{2^{d-1} (d-2)}\]
fixed points, of which $\frac{ {(2^{d-1} + 1)}^n - 1}{2^{d-1}}$ are real, and an arrangement of base points. The base points comprise
${{d \choose 2}}^n - c(d-1)^n + {{c \choose 2}}$
components, each of dimension $\sum_{j = 1}^d ( n_j - 1 ) - 2n$, that are products of linear subspaces of each $\mathbb{P}^{n_j - 1}$. Here, $n = \text{min} \{n_1, \dots, n_d\}$ and $c = \# \{ j : n_j = n \}$.
%
\end{theorem}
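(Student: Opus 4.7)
The plan is to expand each $x^{(j)}$ in the orthonormal basis $\{v_1^{(j)},\dots,v_n^{(j)}\}$ extended to a basis of $\R^{n_j}$: write $x^{(j)} = \sum_{i=1}^n a_i^{(j)} v_i^{(j)} + w^{(j)}$ with $w^{(j)}$ orthogonal to $\mathrm{span}\{v_1^{(j)},\dots,v_n^{(j)}\}$. Then $T(x^{(1)},\dots,\cdot,\dots,x^{(d)}) = \sum_i \sigma_i \bigl(\prod_{k\neq j} a_i^{(k)}\bigr) v_i^{(j)}$ lies in $\mathrm{span}\{v_1^{(j)},\dots,v_n^{(j)}\}$. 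Following the preceding remark, the singular vector condition splits into the \emph{fixed-point} regime, where this contraction is nonzero for each $j$ (forcing $w^{(j)}=0$ and $\sigma_i \prod_{k\neq j} a_i^{(k)} = \lambda_j a_i^{(j)}$ for some $\lambda_j\neq 0$), and the \emph{base-point} regime, where the contraction vanishes for every $j$, so $\sigma_i \prod_{k\neq j} a_i^{(k)} = 0$ for all $i, j$ while each $w^{(j)}$ remains free.

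For the fixed points, the equations imply that if even one $a_i^{(j_0)}\neq 0$ then every $a_i^{(k)}\neq 0$, so the solutions stratify by their \emph{support} $S := \{i : a_i^{(j)}\neq 0 \text{ for all } j\}$, with $a_i^{(j)} = 0$ for $i\notin S$. On $i\in S$ the substitution $b_i^{(j)} := \sigma_i^{1/(d-2)} a_i^{(j)}$ absorbs the weights, reducing the system to $\prod_{k\neq j} b_i^{(k)} = \lambda_j b_i^{(j)}$. Multiplying over $j$ gives $\bigl(\prod_k b_i^{(k)}\bigr)^{d-2} = \prod_j \lambda_j$, so $P_i := \prod_k b_i^{(k)}$ is a $(d-2)$-th root of a common constant and $(b_i^{(j)})^2 = P_i/\lambda_j$. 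I would then use the $(\C^*)^d$ projective rescaling to normalize $\lambda_j = 1$ and solve directly: per index $i\in S$, $P_i$ may be any $(d-2)$-th root of unity, and $b_i^{(j)} = \pm\sqrt{P_i}$ with signs $\epsilon_i^{(j)}$ subject to one constraint $\prod_j \epsilon_i^{(j)} = \pm 1$ determined by $P_i$, giving $(d-2)\cdot 2^{d-1}$ affine solutions per $i$. A short computation shows the orbit of any such solution under $(\C^*)^d$ meets the slice $\{\lambda_j = 1\}$ in exactly $2^{d-1}(d-2)$ points (indexed by $C := \prod_j c_j \in \mu_{d-2}$ and sign configurations $c_j = \pm\sqrt{C}$ with $\prod_j c_j = C$), so per support of size $m$ there are $\bigl(2^{d-1}(d-2)\bigr)^{m-1}$ projective solutions; summing $\sum_{m=1}^n \binom{n}{m}\bigl(2^{d-1}(d-2)\bigr)^{m-1}$ yields the complex total. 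For the real count, positivity of each $(b_i^{(j)})^2$ forces the $P_i$'s to share a common real sign, which the rescaling may set to $+1$; this removes the $(d-2)$-th root factor and gives $2^{(d-1)(m-1)}$ real solutions per support of size $m$, summing to the claimed real formula.

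For the base points, the condition $\sigma_i \prod_{k\neq j} a_i^{(k)} = 0$ for every $i, j$ translates to the combinatorial requirement that $Z_i := \{k : a_i^{(k)} = 0\}$ satisfies $|Z_i|\geq 2$ for each $i$. Compatible tuples sweep out the product of projective subspaces $\prod_j \PP(L_j)$, where $L_j := \mathrm{span}\{v_i^{(j)} : i\in R_j\} \oplus \mathrm{span}\{v_1^{(j)},\dots,v_n^{(j)}\}^\perp$ has projective dimension $|R_j|+n_j-n-1$ with $R_j := \{i : j\notin Z_i\}$. Using $\sum_j |R_j| = nd - \sum_i |Z_i|$, the total dimension $\sum_j(n_j-1) - \sum_i |Z_i|$ is maximized precisely when each $|Z_i|=2$, yielding the claimed dimension $\sum_j(n_j-1) - 2n$. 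Each assignment of $Z_i$ as a $2$-subset of $\{1,\dots,d\}$ produces a distinct maximal component, giving a naive count of $\binom{d}{2}^n$. By inclusion-exclusion on the $c$ indices $j$ with $n_j=n$ (for which $R_j=\emptyset$ forces $\PP(L_j)=\emptyset$), the patterns to subtract contribute $c(d-1)^n - \binom{c}{2}$, the higher-order terms vanishing since no $2$-subset contains three distinct elements. This yields the final count $\binom{d}{2}^n - c(d-1)^n + \binom{c}{2}$.

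The main obstacle I anticipate is the sign and root-of-unity bookkeeping in the fixed-point count, in particular pinning down the $(\C^*)^d$-orbit size $2^{d-1}(d-2)$ exactly and confirming which sign configurations are compatible with the original (unsquared) equations; a secondary care is needed to ensure that distinct patterns $\{Z_i\}$ in the base-point analysis yield genuinely distinct irreducible components, rather than one component being contained in the closure of another.
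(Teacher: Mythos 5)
Your argument is correct and reaches the stated formulas, but by a genuinely more elementary route than the paper. For the fixed points, the paper writes down the Laurent binomial ideal cut out by the rank-one conditions and decomposes it via the Eisenbud--Sturmfels theory of lattice ideals, extending a partial character from a sublattice $L_\rho$ to the saturation $L$; this directly produces the parametrization by roots of unity $\eta_i$ and signs $\chi_i^{(j)}$. You instead normalize $\lambda_j=1$ using the $(\C^*)^d$-action and solve $\prod_{k\neq j} b_i^{(k)}=b_i^{(j)}$ by hand, arriving at $(b_i^{(j)})^2=P_i$ with $P_i^{d-2}=1$ plus a sign-parity constraint, then divide the affine count per support by the stabilizer of the slice $\{\lambda_j=1\}$, of order $2^{d-1}(d-2)$, which acts freely since no coordinate on the support vanishes. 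Your real count works similarly; the one point worth spelling out is that over $\R$ one can always flip the common sign of the $\lambda_j$ first (the rescaling $c=(-1,1,\dots,1)$ negates every $\lambda_j$) and that the real slice-stabilizer has order exactly $2^{d-1}$, since the branch with $\prod_j c_j=-1$ would require $c_j^2<0$. For the base points, the paper computes the Chow class $\bigl(\sum_{j<k}t_jt_k\bigr)^n$ in $\Z[t_1,\dots,t_d]/\langle t_j^{n_j}\rangle$ and counts surviving monomial choices; your inclusion--exclusion over the patterns $\{Z_i\}$ with $|Z_i|=2$, removing those with $R_j=\emptyset$ and $n_j=n$, is exactly the combinatorial translation of that computation, with the added benefit of describing each component explicitly as $\prod_j \PP(L_j)$. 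The two caveats you flag resolve cleanly: the slice-stabilizer acts freely as noted, and distinct patterns $\{Z_i\}$ determine distinct sets $\{R_j\}$ and hence distinct linear subspaces all of the same (maximal) dimension, so none is contained in another.
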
 

In particular, Theorem~\ref{thm:main} implies that for all but a few small cases the singular vector tuples of an odeco tensor comprise a positive-dimensional variety. In contrast, the variety of singular vector tuples of a generic tensor is zero-dimensional \cite{FO}. It is interesting to study how the positive-dimensional components of the singular vector variety for an odeco tensor adopt generic behavior under a small perturbation. Note the contrast to the variety of eigenvectors of a symmetric odeco tensor, which is also zero-dimensional \cite{Robeva}.
%

The rest of this paper is organized as follows. In Section \ref{sec:eigenvectors}, we use the theory of binomial ideals \cite{ES} to describe the singular vector tuples of an odeco tensor. In Section \ref{sec:proof} we conclude the proof of our theorem by describing the positive-dimensional components of the variety of singular vector tuples. Finally, in Section 4, we explore the structure of these components in more detail by studying specific examples.


\section{Description of the Singular Vector Tuples}\label{sec:eigenvectors}

In this section we give a formula for the singular vector tuples of an odeco tensor. We start by considering a diagonal odeco tensor.

\begin{lemma}\label{lem:singular} Let $S\in\mathbb R^{n_1}\otimes \cdots \otimes \mathbb R^{n_d}$ be the tensor
$$S = \sum_{i=1}^n\sigma_i e^{(1)}_i\otimes \cdots \otimes e_i^{(d)},$$
where $\sigma_1, \ldots ,\sigma_n\neq0$, the vector $e^{(j)}_i$ is the $i$th basis vector in $\mathbb{R}^{n_j}$, and $n = \min\{n_1,\dots, n_d\}$.
The singular vector tuples $( x^{(1)}, \ldots , x^{(d)}) \in\mathbb P^{n_1-1}\times \ldots \times\mathbb P^{n_d-1}$ of $S$ are given as follows.
\begin{enumerate}

\item[\underline{Type I:}]

Tuples $( x^{(1)}, \ldots , x^{(d)})$ of the form 
\begin{align}\label{eqn:diagodeco}  \sigma_{\tau(1)}^{-\frac1{d-2}} \begin{pmatrix}e^{(1)}_{\tau(1)}, e^{(2)}_{\tau(1)},\dots, e^{(d)}_{\tau(1)}\end{pmatrix} + \sum_{i = 1}^m \eta_i \sigma_{\tau(i)}^{-\frac1{d-2}}  \begin{pmatrix} e^{(1)}_{\tau(i)}, \chi_{i}^{(2)}e^{(2)}_{\tau(i)},\dots, \chi_{i}^{(d)}e^{(d)}_{\tau(i)}\end{pmatrix} \end{align}

where $1\leq m \leq n$, the scalars $\chi^{(j)}_{i} \in\{\pm 1\}$ are such that $\prod_{j=2}^d \chi^{(j)}_{i} = 1$ for every $i=1,...,m$, each scalar $\eta_{i}$ is a $(2d-4)$-th root of unity, and $\tau$ is any permutation on $\{1,\dots, n\}$. The tuples with real coordinates are those for which each scalar $\eta_i$ is a real root of unity, i.e. $\eta_i \in \{ \pm 1\}$.

\item[\underline{Type II:}]

All tuples $( x^{(1)}, \ldots , x^{(d)})$ such that the $n \times d$ matrix $X = (x^{(j)}_{i})_{1\leq i \leq n, 1 \leq j\leq d}$ has at least two zeros in each row. Since each $x^{(j)} \in\mathbb P^{n_j-1}$, we further require that no $x^{(j)}$ has all coordinates equal to zero.
\end{enumerate}
\end{lemma}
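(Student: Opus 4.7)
The plan is to set up the singular vector equations for the diagonal tensor $S$ explicitly and then split on whether any contracted vector is nonzero. A direct computation shows that the $j$-th contraction equals $\sum_{i=1}^n \sigma_i \prod_{k \neq j} x^{(k)}_i \, e^{(j)}_i$, which must be parallel to $x^{(j)}$. This splits the singular vector tuples into two disjoint families: those for which all $d$ contractions vanish (Type II, the base points), and those for which at least one contraction is nonzero (Type I, the fixed points).

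For Type II, vanishing of the $j$-th contraction means $\prod_{k \neq j} x^{(k)}_i = 0$ for every $i \in [n]$. Imposing this simultaneously for all $j$ is equivalent to requiring at least two zero entries in every row of the $n \times d$ matrix $X = (x^{(j)}_i)$, together with the projective condition that no $x^{(j)}$ is entirely zero. This matches the Type II description directly, so this half of the proof is immediate.

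For Type I, the key preliminary step is to show that if any one contraction is nonzero, then all of them are. If $c^{(j_0)} = \lambda_{j_0} x^{(j_0)}$ is nonzero, pick $i_0$ with $x^{(j_0)}_{i_0} \neq 0$; then $\sigma_{i_0} \prod_{k \neq j_0} x^{(k)}_{i_0} \neq 0$ forces $x^{(k)}_{i_0} \neq 0$ for every $k$, hence $c^{(j)}_{i_0} \neq 0$ for every $j$. Writing $c^{(j)} = \lambda_j x^{(j)}$ with $\lambda_j \neq 0$, the identity $\operatorname{supp}(x^{(j)}) = \operatorname{supp}(c^{(j)})$ and the computation above imply that the supports $\operatorname{supp}(x^{(j)}) \subseteq [n]$ coincide across all $j$; call this common support $S$, with $|S| = m$. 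On $S$ the equations reduce to $\sigma_i \prod_{k=1}^d x^{(k)}_i = \lambda_j (x^{(j)}_i)^2$. Eliminating the $\sigma_i$ by the substitution $y^{(j)}_i := \sigma_i^{1/(d-2)} x^{(j)}_i$ turns them into $\prod_k y^{(k)}_i = \lambda_j (y^{(j)}_i)^2$, a condition that is now independent of $i$. Fixing an anchor $i_0 \in S$, normalizing $y^{(j)}_{i_0}$ via projective scaling, and writing $y^{(j)}_i / y^{(j)}_{i_0} = \epsilon^{(j)}_i t_i$ with $\epsilon^{(j)}_i \in \{\pm 1\}$, the equation forces $t_i^{d-2} = \prod_k \epsilon^{(k)}_i \in \{\pm 1\}$, so $t_i$ is a $(2d-4)$-th root of unity.

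The main obstacle is translating this natural ``sign $\times$ root of unity'' parametrization into the cleaner form stated in the lemma, involving $\eta_i$ and $\chi^{(j)}_i$ subject to $\prod_{j=2}^d \chi^{(j)}_i = 1$. This amounts to a careful relabeling that uses the remaining projective scaling freedom in each factor to move a sign into $\eta_i$ versus into the $\chi^{(j)}_i$; the count $(2d-4) \cdot 2^{d-2} = 2^{d-1}(d-2)$ of inequivalent choices per index in $S \setminus \{i_0\}$ is the same in both presentations, and tracking the bijection explicitly completes the identification. The reality claim follows by observing that $x^{(j)}$ is real iff each $t_i$ (equivalently $\eta_i$) is real, which restricts $\eta_i$ to $\{\pm 1\}$, the real $(2d-4)$-th roots of unity.
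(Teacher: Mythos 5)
Your overall strategy is sound and is genuinely more elementary than the paper's: where you manipulate the parallelism equations directly, the paper encodes the all-nonzero-support case as a lattice (binomial) ideal and invokes the Eisenbud--Sturmfels decomposition into ideals $I(\rho')$ indexed by partial characters extending $\rho$. Your Type II argument and your reduction to a common support are exactly the paper's Cases 1 and 2, and your relation $t_i^{d-2}=\prod_{k}\epsilon^{(k)}_i$ is precisely the relation $\eta_{i}^{d-2}=\prod_{j=2}^d\chi^{(j)}_{i}$ that the paper derives from \eqref{eq:necessaryEquation}. Up to that point the argument is correct; you should just also record sufficiency (that every tuple satisfying your constraint is a singular vector tuple), which holds because your derivation is reversible: given $y^{(j)}_i=\epsilon^{(j)}_i t_i\, y^{(j)}_{i_0}$ with $t_i^{d-2}=\prod_k\epsilon^{(k)}_i$, the quantity $\prod_k y^{(k)}_i/(y^{(j)}_i)^2$ is independent of $i$, so a consistent $\lambda_j$ exists.

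The genuine gap is the final ``relabeling,'' which you flag as the main obstacle and then dispatch with a cardinality count. Equal cardinality is not enough, and the bijection you appeal to does not exist: the ratios $x^{(j)}_i/x^{(j)}_{i_0}$ are invariant under the rescalings $x^{(j)}\mapsto\lambda_j x^{(j)}$, so no residual projective freedom can convert the family cut out by $t_i^{d-2}=\prod_k\epsilon^{(k)}_i$ into the family cut out by $\prod_{j=2}^d\chi^{(j)}_i=1$ with $\eta_i$ unconstrained. Already for $d=3$, $n=m=2$ the two families are different subsets of $\mathbb{P}^1\times\mathbb{P}^1\times\mathbb{P}^1$: the condition $\prod_{j\geq 2}\chi^{(j)}=1$ with free $\eta\in\{\pm1\}$ would admit $\left(e_1-e_2,\ e_1+e_2,\ e_1+e_2\right)$, which is not a singular vector tuple of $e_1\otimes e_1\otimes e_1+e_2\otimes e_2\otimes e_2$, and would omit $\left(e_1-e_2,\ e_1+e_2,\ e_1-e_2\right)$, which is one and is listed in Example~\ref{ex233}. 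The correct constraint is the one you derived; the condition $\prod_{j=2}^d\chi^{(j)}_i=1$ in the statement should be read as $\eta_i^{d-2}=\prod_{j=2}^d\chi^{(j)}_i$, which is exactly what the paper's own proof obtains before restating it. So rather than hunting for a bijection onto the literal statement, stop at your derived parametrization, note that it coincides with the one produced inside the paper's proof, and observe that only the per-index cardinality $2^{d-1}(d-2)$ (which is all that the count in Theorem~\ref{thm:main} uses) agrees between the two phrasings. A last small caveat: your reality criterion ``$x^{(j)}$ real iff $t_i$ real'' needs care when some $\sigma_i<0$ and $d-2$ is even, since then $\sigma_i^{1/(d-2)}$ is not real; the count of two real choices per index survives, but they need not be $t_i=\pm1$ for the chosen branch.
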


 Before proving Lemma \ref{lem:singular}, we illustrate it by way of the following example:

\begin{example}\label{ex233} Consider the odeco tensor $S = e_1^{(1)} \otimes e_1^{(2)} \otimes e_1^{(3)} + e_2^{(1)} \otimes e_2^{(2)} \otimes e_2^{(3)} \in\mathbb R^2\otimes \mathbb R^3\otimes \mathbb R^3$. Its Type I singular vector tuples are

$$\begin{pmatrix}e_1^{(1)}, e_1^{(2)}, e_1^{(3)} \end{pmatrix},
\begin{pmatrix}e_2^{(1)}, e_2^{(2)}, e_2^{(3)} \end{pmatrix} $$
$$
\begin{pmatrix}e_1^{(1)} + e_2^{(1)}, e_1^{(2)} + e_2^{(2)}, e_1^{(3)} + e_2^{(3)} \end{pmatrix},
\begin{pmatrix}e_1^{(1)} + e_2^{(1)}, e_1^{(2)} - e_2^{(2)}, e_1^{(3)} - e_2^{(3)} \end{pmatrix}, $$
$$\begin{pmatrix}e_1^{(1)} - e_2^{(1)}, e_1^{(2)} + e_2^{(2)}, e_1^{(3)} - e_2^{(3)} \end{pmatrix},
\begin{pmatrix}e_1^{(1)} - e_2^{(1)}, e_1^{(2)} - e_2^{(2)}, e_1^{(3)} + e_2^{(3)} \end{pmatrix},
$$

%
%
%
The Type II singular vectors make five copies of $\mathbb P^1$, namely

$$\begin{pmatrix} \square e_1^{(1)} + \square e_2^{(1)}, e_3^{(2)}, e_3^{(3)}
\end{pmatrix},
\begin{pmatrix}e_1^{(1)}, \square e_2^{(2)} + \square e_3^{(2)}, e_3^{(3)}\end{pmatrix}, 
\begin{pmatrix}e_1^{(1)}, e_3^{(2)},  \square e_2^{(3)} + \square e_3^{(3)}\end{pmatrix}, $$
$$\begin{pmatrix}e_1^{(1)}, \square e_1^{(2)} + \square e_3^{(2)}, e_3^{(3)}\end{pmatrix}, 
\begin{pmatrix}e_2^{(1)}, e_3^{(2)},  \square e_1^{(3)} + \square e_3^{(3)}\end{pmatrix},$$
where two $\square$'s in a vector indicate a copy of $\mathbb{P}^1$ on those two coordinates. The five copies of $\mathbb{P}^1$ intersect in two triple intersections, as seen in Figure \ref{fig:233a1}.

\begin{figure}[h]
\centering
\includegraphics[width = 0.4 \linewidth]{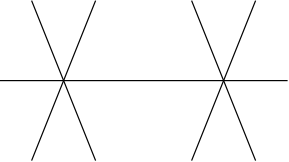}
\caption{The Type II singular vectors: five copies of $\mathbb{P}^1$ meeting at two triple intersections}
\label{fig:233a1}
\end{figure}
According to \cite{FO}, the generic number of singular vector tuples of a tensor of this size is 15, so the five copies of $\mathbb P^1$ degenerate from nine points. For example, consider the family of perturbed tensors 
$$S_\epsilon = S + \epsilon T,$$
where $T$ is the $2\times 3\times 3$ tensor with slices $T_{1,\cdot,\cdot}$ and $T_{2, \cdot, \cdot}$ given by
$$T_{1,\cdot, \cdot} = \begin{pmatrix} 0 & 40 & 10\\
100&  3& 3\\
3& 2 & 6
\end{pmatrix}, \quad\quad
T_{2, \cdot, \cdot} = \begin{pmatrix}
7 & 1 &1 \\
8& 0 &2\\
2& 2 &3
\end{pmatrix}.$$
For $\epsilon$ on the order of $10^{-6}$ we attain nine points: one point near each copy of $\mathbb{P}^1$, and two points of multiplicity 2 near each triple intersection.
\end{example}
We will return this example in Section \ref{sec:4}.

\begin{proof}[Proof of Lemma \ref{lem:singular}]
By definition, $(x^{(1)}, \ldots , x^{(d)})$ is a singular vector tuple of $S$ if and only if for each $j=1,\dots, d$ the following matrix has rank at most one:
$$M_{S, j} = \begin{bmatrix} S(x^{(1)}, \ldots, x^{(j-1)}, \cdot, x^{(j+1)}, \ldots, x^{(d)} ) &| & x^{(j)}  
\end{bmatrix}
=\begin{bmatrix}\sigma_1 x^{(1)}_{1}\cdots \hat{x}^{(j)}_{1}\cdots x^{(d)}_{1} & x^{(j)}_{1}\\
\vdots & \vdots\\
\sigma_n x^{(1)}_{n}\cdots \hat{x}^{(j)}_{n}\cdots x^{(d)}_{n} & x^{(j)}_{n}
\end{bmatrix}$$
where $\hat{x}^{(j)}_i$ denotes the omission of $x^{(j)}_i$ from the product.

We examine the structure of the singular vectors tuples of $S$ by looking at the following three cases.

\bigskip
\underline{\textbf{Case 1:}} Consider the variables $x^{(1)}_{i}, \ldots, x^{(d)}_{i}$, where $i\in \{1,2,\ldots,n\}$ is fixed. Suppose that exactly one of the variables $x^{(j)}_{i} = 0$, i.e. $x^{(k)}_{i}\neq 0$ for all $k\neq j$. The $i$-th row of the matrix $M_{S, j}$ has first entry $\sigma_i x^{(1)}_{i}\ldots \hat{x}^{(j)}_{i} \ldots x^{(d)}_{i} \neq 0$ and second entry $x^{(j)}_{i} = 0$. Therefore, in order for this matrix to have rank $1$, we need the whole second column to be zero, i.e. $x^{(j)}_{1} = \cdots = x^{(j)}_{n} = 0$. Since $x^{(j)} \in\mathbb P^{n_j-1}$, this can only happen if $n_j > n$ and one of the last $n_j - n$ coordinates of $x^{(j)}$ is nonzero. But the contraction $S( x^{(1)}, \ldots , x^{(j-1)}, \cdot, x^{(j+1)}, \ldots x^{(d)})$ lies in the span of $e^{(j)}_1,\dots, e^{(j)}_n$, so in order for it to be parallel to $x^{(j)}$ it has to be 0. In particular, its $i$-th entry $\sigma_i x^{(1)}_{i}\ldots\hat{x}^{(j)}_{i}\ldots x^{(d)}_{i}$ has to be 0. Contradiction!  Therefore, we can't have exactly one of the variables $x^{(1)}_{i}, \ldots, x^{(d)}_{i}$ equal to $0$.

\bigskip
\underline{\textbf{Case 2:}} Suppose that for some $i$ at least two of the entries $x^{(1)}_{i}, \ldots, x^{(d)}_{i}$, but not all of them, are equal to $0$. This means that the the entry in the $i$-th row and the first column of $M_{S, k}$ is $0$ for every $k$ and if $x^{(k)}_{i}\neq 0$ (and we assumed that one such $k$ exists), then, the entry in the $i$-th row and the second column is not $0$. For such a $k$, the whole first column of $M_{S, k}$ must be $0$ in order that it have rank 1. Therefore, for every $i$, at least two of the entries $x^{(1)}_{i}, \ldots, x^{(d)}_{i}$ are equal to $0$. Conversely, if for every $i$ at least two of the entries $x^{(1)}_{i}, \ldots, x^{(d)}_{i}$ are equal to $0$ in such a way that $x^{(j)} \in \mathbb P^{n_j-1}$, then, $( x^{(1)}, \ldots,  x^{(d)} )$ is a singular vector tuple of $S$. This gives the singular vector tuples of Type II, also known as the base points.

\bigskip
\underline{\textbf{Case 3:}} It remains to consider the situation where, for every $i$, either $x^{(1)}_{i} = \ldots = x^{(d)}_{i} = 0$ or none of the variables $x^{(1)}_{i}, \ldots , x^{(d)}_{i}$ are $0$. After reordering, assume that $x^{(1)}_i = \ldots = x^{(d)}_i = 0$ for $m+1 \leq i \leq n$, for some $m\leq n$, and $x^{(1)}_i, \ldots, x^{(d)}_i \neq 0$ for $1 \leq i \leq m$.

The condition for being a singular vector tuple now yields the following system of polynomial equations in the Laurent polynomial ring $\mathbb{C} \left[ x^{(j)}_i, \frac{1}{x^{(j)}_i} : 1 \leq i \leq m, 1 \leq j \leq d \right]$:

\begin{align}\label{equations}
I = \left\langle \sigma_i x^{(1)}_{i}\ldots\hat{x}^{(j)}_{i}\ldots x^{(d)}_{i} x^{(j)}_{l} = \sigma_l x^{(1)}_{l}\ldots\hat{x}^{(j)}_{l} \ldots x^{(d)}_{l}x^{(j)}_{i} : 1\leq j \leq d, 1 \leq i,l \leq m \right\rangle
\end{align}
To solve this system of equations, we use the theory of binomial ideal decomposition developed in \cite{ES}.

Consider the lattice
$$ L_\rho = \left\langle \sum_{k = 1}^d (e^{(k)}_i - e^{(k)}_l ) - 2 ( e^{(j)}_i - e^{(j)}_l ) : 1 \leq j \leq d, 1 \leq i, l \leq m \right\rangle \subseteq \mathbb{Z}^{d \times m} $$
where $e^{(a)}_b$ is the elementary basis vector in $\mathbb{Z}^{d \times m}$ with a 1 in coordinate $(a,b)$. Let $\rho : L_\rho \to \mathbb{C}^*$ denote the partial character
\begin{align}\label{partialchar} \rho \left( \sum_{k = 1}^d ( e^{(k)}_i - e^{(k)}_l ) - 2 ( e^{(j)}_i - e^{(j)}_l ) \right) = \frac{\sigma_l}{\sigma_i} \quad \forall \text{ }1 \leq j \leq d, 1 \leq i, l, \leq m \end{align}
Then the lattice ideal $I(\rho) = \langle x^v - \rho(v) : v \in L_\rho \rangle $ is our ideal $I$, where $x^v$ denotes taking the variables $x^{(j)}_i$ in the ring to the powers indicated by the lattice element $v$.

We have the inclusion $L_\rho \subseteq L = \langle e^{(j)}_i - e^{(j)}_l : 1 \leq j \leq d, 1 \leq i,l \leq m \rangle$. Therefore by \cite[Theorem 2.1]{ES},
$$I(\rho) = \bigcap_{\rho' \text{ extends }\rho \text{ to } L} I(\rho').$$

To decompose the ideal $I(\rho)$, we therefore seek to characterize the partial characters $\rho'$ of $L$ which extend $\rho$. Summing \eqref{partialchar} over $1 \leq j \leq d$ gives the formula
$$\rho\left(\sum_{j=1}^d \left( \sum_{k=1}^d (e^{(k)}_{i} - e^{(k)}_l ) - 2 ( e^{(j)}_i - e^{(j)}_{l} ) \right) \right) = \left( \frac{\sigma_l}{\sigma_i}\right)^d$$
which, after simplifying, yields $\rho \left((d-2)\sum_{k = 1}^d (e^{(k)}_{i}-e^{(k)}_{l})\right) = \left(\frac{\sigma_l}{\sigma_i}\right)^d$. Therefore, any $\rho'$ extending $\rho$ satisfies
\begin{align}\label{eq1}
\rho' \left( \sum_{k = 1}^d (e^{(k)}_i - e^{(k)}_l ) \right) = \phi_{il} {\left( \frac{\sigma_l}{\sigma_i} \right) }^{ \frac{d}{d-2}}
\end{align}
where $\phi_{il}$ is a $(d-2)$-th root of unity. By rearranging \eqref{partialchar}, we furthermore see that any such $\rho'$ must satisfy
\begin{align}\label{eq2}
\rho' \left( 2 ( e^{(j)}_i - e^{(j)}_l )\right) = \rho' \left( \sum_{k = 1}^d (e^{(k)}_i - e^{(k)}_l) \right) \left( \frac{\sigma_i}{\sigma_l} \right).
\end{align}
Combining \eqref{eq1} and \eqref{eq2} yields
$$\rho' \left( 2 ( e^{(j)}_i - e^{(j)}_l )\right) = \phi_{il}\left(\frac{\sigma_l}{\sigma_i}\right)^{\frac2{d-2}}.$$
Thus,
$$ \rho' \left( e^{(j)}_i - e^{(j)}_l \right) =  \phi^{(j)}_{il} \left( \frac{\sigma_l}{\sigma_i} \right)^{\frac{1}{d-2}} $$
where $\phi_{il}^{(j)}$ are $2(d-2)$-th roots of unity such that $(\phi_{il}^{(j)})^2 = \phi_{il}$ for all $j=1,\dots, d$. It remains to find the relations satisfied by the $\phi_{il}^{(j)}$ as $i, l, j$ vary so that the original equation~\eqref{partialchar} is satisfied. Substituting our expression for $\rho'$ in to \eqref{partialchar} yields
$$\prod_{k=1}^d\left(\phi_{il}^{(k)} \left( \frac{\sigma_l}{\sigma_i}\right)^{\frac1{d-2}}\right) \phi_{il}^{-1}\left(\frac{\sigma_l}{\sigma_i}\right)^{-\frac2{d-2}} = \frac{\sigma_l}{\sigma_i},$$
which is equivalent to
\begin{align}\label{eq:necessaryEquation}
\prod_{k=1}^d\phi_{il}^{(k)} = \phi_{il}.
\end{align}


To satisfy these conditions, we express the roots of unity in the following way. Denote the $(2d-4)$-th root of unity $\phi_{il}^{(1)}$ by $\eta_{il}$. Since $\eta_{il}^2 = \phi_{il} = (\phi_{il}^{(j)})^2$ for all $j = 1, \ldots, d$, each $\phi_{il}^{(j)}$ can be written in terms of $\eta_{il}$ as $\phi_{il}^{(j)} = \eta_{il}\chi_{il}^{(j)}$, where $\chi_{il}^{(j)} \in \{ \pm 1 \}$. Equation~\eqref{eq:necessaryEquation} can now be written as
$$\eta_{il}^d \prod_{j=2}^d\chi_{il}^{(j)} = \phi_{il} = \eta_{il}^2.$$
Equivalently,
$$\eta_{il}^{d-2} = \prod_{j=2}^d \chi_{il}^{(j)},$$
where we note that since $\eta_{il}$ is a $(2d-4)$-th root of unity, the multiple $\eta_{il}^{d-2} \in \{ \pm 1 \}$.

Finally, since $(e^{(j)}_i - e^{(j)}_l ) + (e^{(j)}_l - e^{(j)}_h ) + (e^{(j)}_h - e^{(j)}_i ) = 0$, applying $\rho$ gives 
$$1 = \chi^{(j)}_{il} \eta_{il} {\left( \frac{\sigma_l}{\sigma_i} \right)}^{\frac{1}{d-2}} \chi^{(j)}_{lh} \eta_{lh} {\left( \frac{\sigma_h}{\sigma_l} \right)}^{\frac{1}{d-2}} \chi^{(j)}_{hi} \eta_{hi} {\left( \frac{\sigma_i}{\sigma_h} \right)}^{\frac{1}{d-2}}.$$

We now have all the relations required to find the ideals $I(\rho')$:
$$ I(\rho') = \left\langle x^{(j)}_i - \chi^{(j)}_{i1} \eta_{i1} {\left( \frac{\sigma_1}{\sigma_i} \right)}^{\frac{1}{d-2}} x^{(j)}_1 : 1 \leq i \leq m, 1 \leq j \leq d \right \rangle $$
where $\chi^{(j)}_{i1} \in \{ \pm 1 \}$ are such that $\chi^{(1)}_{i1} = 1$, $\prod_{j = 2}^d \chi^{(j)}_{i1} = 1$ and the $\eta_{i1}$ are $(2d-4)$-th roots of unity. Setting $\chi^{(j)}_{i} = \chi^{(j)}_{i1}$ and $\eta_i = \eta_{i1}$, and taking $I$ to be the intersection of the $I(\rho')$, we obtain the required form of our singular vector tuples:
$$ I = \bigcap_{\eta, \chi} \left \langle x^{(j)}_i - \chi^{(j)}_i \eta_i { \left( \frac{\sigma_1}{\sigma_i} \right) }^{\frac{1}{d-2} } x_{1}^{(j)}: 1 \leq i \leq m, 1 \leq j \leq d \right \rangle .$$
These are the singular vector tuples of Type 1, also known as the fixed points.
\end{proof}

Now, we proceed to the main result of this section. We describe the singular vector tuples of a general odeco tensor.

\begin{proposition}\label{thm:general_tensor} Let $T = \sum_{i=1}^n \sigma_iv^{(1)}_{i}\otimes\cdots\otimes v^{(d)}_{i}\in\mathbb R^{n_1}\otimes\cdots\otimes \mathbb R^{n_d}$ be an odeco tensor such that $v^{(j)}_{1}, \ldots, v^{(j)}_{n}\in\mathbb R^{n_j}$ are orthonormal vectors. Let $V^{(j)}\in\mathbb R^{n_j}\otimes \mathbb R^{n_j}$ be any orthogonal matrix whose first $n$ columns are $v^{(j)}_{1}, \ldots , v^{(j)}_{n}$. Then, the singular vector tuples of $T$ are given by $( V^{(1)}x^{(1)},  \ldots , V^{(d)}x^{(d)})$ where $( x^{(1)}, \ldots , x^{d)})$ is a singular vector tuple of the diagonal tensor $S = \sum_{i=1}^n\sigma_i e^{(1)}_{i}\otimes\cdots\otimes e^{(d)}_i$ described in Lemma \ref{lem:singular}. In other words, the singular vectors of $T$ are as follows.
\begin{enumerate}
\item[\underline{Type I:}]
Tuples $\begin{pmatrix} V^{(1)}x^{(1)}, \dots, V^{(d)} x^{(d)}\end{pmatrix},$
such that $(x^{(1)}, \ldots , x^{d)})$ is a Type I singular vector of the diagonal odeco tensor in Lemma \ref{eqn:diagodeco}.

\item[\underline{Type II:}] Tuples $\begin{pmatrix} V^{(1)}x^{(1)}, \dots, V^{(d)} x^{(d)}\end{pmatrix},$ where the matrix $X = (x^{(j)}_{i})_{ij}$ has at least two zeros in each row such that none of the vectors $x^{(j)} \in\mathbb P^{n_j-1}$ is identically zero.
\end{enumerate}
\end{proposition}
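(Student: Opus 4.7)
The plan is to reduce the general case to the diagonal case of Lemma \ref{lem:singular} by exploiting the fact that the multilinear action of orthogonal matrices on a tensor transforms singular vector tuples equivariantly. First I would observe that since the first $n$ columns of $V^{(j)}$ are exactly $v^{(j)}_1, \dots, v^{(j)}_n$, we have $V^{(j)} e^{(j)}_i = v^{(j)}_i$ for $1 \leq i \leq n$, and therefore the odeco tensor $T$ is the image of the diagonal tensor $S$ under the multilinear map $(V^{(1)}, \dots, V^{(d)})$, that is,
\[
T = \sum_{i=1}^n \sigma_i \bigl(V^{(1)} e^{(1)}_i\bigr) \otimes \cdots \otimes \bigl(V^{(d)} e^{(d)}_i\bigr).
\]

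Next I would unpack what a contraction of $T$ looks like under this substitution. A direct computation, using multilinearity of the tensor contraction in each argument, gives
\[
T\bigl(y^{(1)}, \dots, y^{(j-1)}, \cdot \,, y^{(j+1)}, \dots, y^{(d)}\bigr) = V^{(j)} \, S\bigl((V^{(1)})^T y^{(1)}, \dots, (V^{(j-1)})^T y^{(j-1)}, \cdot \,, (V^{(j+1)})^T y^{(j+1)}, \dots, (V^{(d)})^T y^{(d)}\bigr),
\]
since $V^{(k)}$ is orthogonal and the contraction against $y^{(k)}$ along the $k$-th mode pulls through as a contraction against $(V^{(k)})^T y^{(k)}$ of the uncontracted tensor $S$ in the $k$-th mode.

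Setting $x^{(k)} = (V^{(k)})^T y^{(k)}$, i.e.\ $y^{(k)} = V^{(k)} x^{(k)}$, the singular vector condition that the left-hand side be parallel to $y^{(j)} = V^{(j)} x^{(j)}$ becomes the condition that $V^{(j)} \, S(x^{(1)}, \dots, \cdot, \dots, x^{(d)})$ be parallel to $V^{(j)} x^{(j)}$. Because $V^{(j)}$ is invertible, parallelism is preserved under multiplication by $V^{(j)}$, so this is equivalent to $S(x^{(1)}, \dots, \cdot, \dots, x^{(d)})$ being parallel to $x^{(j)}$. Running this equivalence for each $j = 1, \dots, d$ establishes a bijection between the singular vector tuples of $T$ and those of $S$, realized by the map $(x^{(1)}, \dots, x^{(d)}) \mapsto (V^{(1)} x^{(1)}, \dots, V^{(d)} x^{(d)})$. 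Combining this bijection with the classification of singular vector tuples of $S$ in Lemma \ref{lem:singular} yields the two types in the statement.

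There is essentially no hard obstacle here: the proof is a straightforward orthogonal change of coordinates argument. The only thing that warrants a line of care is checking that the multilinear contraction really does commute with the orthogonal action in the way claimed, and that projective parallelism is preserved under $V^{(j)}$; both are immediate from invertibility of $V^{(j)}$.
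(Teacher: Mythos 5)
Your proof is correct and follows essentially the same route as the paper: express $T$ as the image of the diagonal tensor $S$ under the multilinear orthogonal change of coordinates, pull that change of coordinates through the contraction to get $T(y^{(1)},\dots,\cdot,\dots,y^{(d)}) = V^{(j)} S(x^{(1)},\dots,\cdot,\dots,x^{(d)})$ with $x^{(k)} = (V^{(k)})^T y^{(k)}$, and note that parallelism is preserved by the invertible $V^{(j)}$. This matches the paper's argument line for line.
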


\begin{proof}
Assume that $(y^{(1)}, \ldots, y^{(d)})$ is a singular vector tuple of $T$. Equivalently, for all $1 \leq j \leq d$, the vector $T(y^{(1)}, \ldots, y^{(j-1)}, \cdot, y^{(j+1)}, \ldots, y^{(d)})$ is parallel to $y^{(j)}$. Unpacking the definition of the contraction, we obtain
\begin{align}\label{2.2eq} T(y^{(1)}, \ldots, y^{(j-1)}, \cdot, y^{(j+1)}, \ldots, y^{(d)}) = \sum_{i = 1}^n \sigma_i \left( \prod_{k \neq j} ( v_i^{(k)} \cdot y^{(k)} ) \right) v_i^{(j)} \end{align}
The inner-product term $(v_i^{(k)} \cdot y^{(k)})$ is the $i$-th element in the vector $x^{(k)} := {(V^{(k)})}^T y^{(k)}$, where $V^{(k)}$ is any orthogonal matrix with first $n$ columns equal to $v_1^{(k)}, \ldots, v_n^{(k)}$. We can re-write the right hand side of \eqref{2.2eq} in terms of the $x^{(k)}$, $1 \leq k \leq d$, as
$$ \sum_{i = 1}^n \sigma_i \left( \prod_{k \neq j} x^{(k)}_i \right) v^{(j)}_i = V^{(j)} \left( \sum_{i = 1}^n \sigma_i \left( \prod_{k \neq j} x^{(k)}_i \right) e_i^{(j)} \right).$$
Therefore,
$$ T(y^{(1)}, \ldots, y^{(j-1)}, \cdot, y^{(j+1)}, \ldots, y^{(d)}) = V^{(j)} S (x^{(1)}, \ldots, x^{(j-1)}, \cdot, x^{(j+1)}, \ldots, x^{(d)} ) ,$$
where $S = \sum_{i = 1}^n \sigma_i e^{(1)}_i \otimes \cdots \otimes e^{(d)}_i$. Since $V^{(j)}$ is orthogonal, $T(y^{(1)}, \dots, y^{(j-1)}, \cdot, y^{(j+1)}, \ldots, y^{(d)})$ and $y^{(j)} = V^{(j)}x^{(j)}$ are parallel if and only if $S(x^{(1)}, \dots, x^{(j-1)}, \cdot, x^{(j+1)}, \ldots, x^{(d)})$ and $x^{(j)}$ are parallel. Therefore, equivalently $(x^{(1)}, \dots, x^{(d)})$ is a singular vector tuple of $S$, and the solutions for all such $(x^{(1)}, \dots, x^{(d)})$ are given in Lemma~\ref{lem:singular}.
\end{proof}

\section{Proof of the Main Theorem}\label{sec:proof}

\begin{proof}[Proof of Theorem \ref{thm:main}]
The count for the contribution of the fixed points to the projective variety of singular vector tuples is obtained as follows directly from Proposition \ref{thm:general_tensor}. For any choice of $m\in \{1, \dots, n\}$, a subset of $\{1,\dots, n\}$ of size $m$, scalars $\eta_i$ which are $(2d-4)$-th roots of unity (where $i\in\{2,\dots,m\}$), and $\chi_i^{(j)}\in\{\pm 1\}$ such that $\prod_{j=2}^d \chi_i^{(j)} = 1$ (where $i\in\{2,\dots,m\}$ and $j\in\{2, \dots, d\}$), we have one singular vector tuple. Therefore, the total number of singular vector tuples of Type I is
$$\sum_{m = 1}^n \binom n m (2d-4)^{m-1} 2^{(m-1)(d-2)} = \frac{(2^{d-1}(d-2) + 1)^n - 1}{2^{d-1}(d-2)}.$$
If we insist that the singular vector tuples be real, we have only two values for the choice of each $\eta_i$ rather than $(2d-4)$ which yields the real count of $\frac{ {(2^{d-1} + 1)}^n - 1}{2^{d-1}}$.

It remains to study the contribution made by the Type II singular vector tuples which constitute the base locus. By Proposition \ref{thm:general_tensor}, we can restrict our attention to the tensor $S = \sum_{i=1}^n\sigma_i e_i\otimes\cdots\otimes e_i$, since its singular vector tuples differ from those of a general tensor only by an orthogonal change of coordinates in each factor.

We first study the case in which all dimensions are equal, $n_1 = \cdots = n_d = n$. Here, the tuple $(x^{(1)}, \dots, x^{(d)})$ is a Type II singular vector tuple if and only if the matrix $X = (x^{(j)}_i)$ has at least two zeros in every row and none of the vectors $x^{(j)}$ is identically zero. This configuration is a subvariety of $\mathbb P^{n-1}\times\cdots\times \mathbb P^{n-1}$. Its ideal is given by
\begin{align}\label{idealeq}
\sum_{i=1}^n\langle x^{(1)}_{i}\cdots \hat{x}^{(j)}_{i}\cdots x^{(d)}_{i}: j=1,\dots, d\rangle \quad = \quad\sum_{i=1}^n \bigcap_{1\leq j < k \leq d} \langle x^{(j)}_{i}, x^{(k)}_{i}\rangle.
\end{align}

We count the number of components in this subvariety by looking at the Chow ring of $\mathbb P^{n-1}\times\cdots\times \mathbb P^{n-1}$, which is $\mathbb Z[t_1,\dots, t_d]/ {\langle t_1^n, \dots, t_d^n \rangle}$. Each $t_j$ represents the class of a hyperplane in $\mathbb{P}^{n_j - 1}$, the $j$th projective space in the product. The equivalence class of the variety $\mathcal V\left(\langle x^{(j)}_{i}, x^{(k)}_{i}\rangle\right)$ is given by $t_j t_k$. We consider the variety
\begin{align}\mathcal V\left(\bigcap_{1\leq j < k \leq d} \langle x^{(j)}_{i}, x^{(k)}_{i}\rangle\right) =\bigcup_{1\leq j < k \leq d} \mathcal V\left(\langle x^{(j)}_{i}, x^{(k)}_{i}\rangle\right)
\end{align}
which yield our variety of interest when we intersect over $i$. Its equivalence class is given by $\sum_{1\leq j < k\leq d} t_jt_k$. From this, we see that the equivalence class in the Chow ring of the total configuration is given by
\begin{align}\label{polyp}
p(t_1,\dots, t_d) = \left(\sum_{1\leq j < k\leq d} t_j t_k\right)^n.
\end{align}

Therefore, to count the number of linear spaces that constitute the Type II singular vector tuples, we wish to count the number of monomials of the polynomial \eqref{polyp} as an element of the Chow ring. Equivalently we count the terms in the expansion, as an element of $\mathbb Z[t_1,\dots, t_d]$, that are not divisible by $t_j^d$ for any $j$.


A monomial in the expanded form of \eqref{polyp} is produced by multiplying one of the ${{d \choose 2}}$ terms in each of the $n$ factors. This produces the first term, ${{d \choose 2}}^n$, in the expression for the number of components in the base locus. We must now subtract those terms that are divisible by $t_j^d$ for some fixed $j$. These are formed by selecting the terms $t_j t_{k_1}, \ldots, t_j t_{k_n}$ from consecutive factors. There are $d-1$ choices for each $k_s$, and $d$ choices for the fixed $j$, yielding at first glance $d(d-1)^n$ terms of this format. However, we have double-counted those terms of the form $t_j^n t_k^n$ for fixed $j$ and $k$, of which there are ${{d \choose 2}}$. Combining these terms gives the correct specialization of our desired formula to the case $c = \# \{ j : n_j = n \} = d$:
\begin{align}\label{eqdim}
{{d \choose 2}}^n - d(d-1)^n + {{d \choose 2}} 
\end{align}
The codimension of the ideal in \eqref{idealeq} is $2n$, so our linear spaces enumerated above are of dimension $d(n-1) - 2n$.

The case of non-equal dimensions follows similarly: consider $S = \sum_{i=1}^n\sigma_i e^{(1)}_i\otimes\cdots\otimes e^{(d)}_i$ of format $n_1 \times \cdots \times n_d$ where $n = \min \{ n_1, \ldots, n_d \}$ and $c = \# \{ j : n_j = n \}$. To count the number of maximal-dimensional linear spaces, we consider the same polynomial \eqref{polyp} in the Chow ring $\Z[t_1, \ldots, t_d]/{\langle t_1^{n_1}, \dots, t_d^{n_d} \rangle }$, and we now want to count the number of terms which are not divisible by $t_j^{n_j}$ for any $j = 1, \ldots, d$.

From the form of $p$ in \eqref{polyp}, we see that it is impossible for a term to be divisible by $t_j^{n_j}$ for any $n_j > n$. Our previous formula \eqref{eqdim} therefore generalizes to
\[ {{d \choose 2}}^n - c(d-1)^n + {{c \choose 2}} \]
and the dimension of each components is $\sum_{j = 1}^d ( n_j - 1 ) - 2n$. This concludes the proof.
\end{proof}


\section{Further Explorations of the Type II Singular Vectors}\label{sec:4}

In this section we turn our attention to the Type II singular vector tuples of the odeco tensor $S = \sum_{i=1}^n e^{(1)}_i\otimes\cdots\otimes e^{(d)}_i$, where $S$ is of format $n_1 \times \cdots \times n_d$ and $n = \min\{ n_1, \ldots, n_d \}$.

We can associate to each projective space $\mathbb{P}^{n_j - 1}$ the simplex $\Delta_{n_j-1}$ and consider our linear spaces as polyhedral subcomplexes (prodsimplicial complexes) in the boundary of the product of simplices $\Delta_{n_1 - 1} \times \ldots \times \Delta_{n_d - 1}$. The number of components in the variety of Type~II singular vector tuples is the number of facets in this complex.

We first return to Example \ref{ex233}, in which we had six Type I singular vector tuples, and the Type II singular vector tuples made up five copies of $\mathbb{P}^1$. In Figure \ref{fig:233b}, we draw the polyhedral complex in $\Delta_1 \times \Delta_2 \times \Delta_2$ corresponding to the Type II singular vector tuples. Motivated by this example, we investigate the shape of the Type II singular vector tuples of other small odeco tensors.

\begin{figure}[h]
\centering
  \includegraphics[width=.2\linewidth]{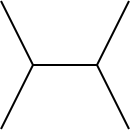}
\caption{The Type II singular vectors tuples of a $2 \times 3 \times 3$ odeco tensor, drawn as a polyhedral complex}
\label{fig:233b}
\end{figure}

It is interesting to stratify odeco tensors according to the dimension of their Type II singular vectors, using the following proposition:

\begin{proposition}\label{thm:finitek} 
For each dimension $k$, the odeco tensors whose Type II singular vector tuples have dimension $k$ come from a finite list of possible sizes $n_1 \times \cdots \times n_d$.
\end{proposition}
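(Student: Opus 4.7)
The plan is to deduce this directly from the dimension formula established in Theorem~\ref{thm:main}: every component of the Type~II locus for an odeco tensor of size $n_1 \times \cdots \times n_d$ has dimension
\[ D \;=\; \sum_{j=1}^d (n_j - 1) \;-\; 2n, \qquad n = \min\{n_1,\ldots,n_d\}. \]
The task thus reduces to showing that, for each fixed integer $k$, the equation $D = k$ has only finitely many solutions $(n_1,\ldots,n_d)$, where $d$ is also allowed to vary over the positive integers.

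The first step is a change of variables that exposes the essential constraints. Writing $m_j := n_j - n \geq 0$ (so $\min_j m_j = 0$), the formula rearranges to
\[ D \;=\; (d-2)\,n \;-\; d \;+\; \sum_{j=1}^d m_j. \]
I restrict attention to $d \geq 3$ (for $d \leq 2$ the component count in Theorem~\ref{thm:main} vanishes identically) and to $n \geq 2$ (factors of size $1$ are trivial extensions and should be identified with the lower-order tensor they extend). From $D = k$ and $\sum m_j \geq 0$ one obtains $(d-2)n \leq k + d$; combining with $n \geq 2$ yields the key bound
\[ d \;\leq\; k + 4. \]
With $d$ now bounded, the same inequality gives $n \leq (k+d)/(d-2) \leq k+3$ (worst case at $d = 3$), and finally $\sum_j m_j = k + d - (d-2)n$ is bounded in terms of $k$, so each $m_j$ is bounded as well. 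Hence the tuple $(d, n, m_1, \ldots, m_d)$ lies in a finite set, and therefore so does the size $(n + m_1, \ldots, n + m_d)$.

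The real obstacle is not this inequality chase but the convention on trivial factors: without identifying sizes that differ by appending copies of $\R$ (indices $j$ with $n_j = 1$), one can inflate $d$ arbitrarily without changing $D$, producing infinitely many sizes realizing the given Type~II dimension. I would therefore make the hypothesis $n \geq 2$ explicit, after which the whole argument collapses to the one-line inequality $2(d-2) \leq (d-2)n \leq k + d$, and the proposition follows; routine enumeration then yields the finite list for any specific $k$.
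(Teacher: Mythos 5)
Your proposal is correct and follows essentially the same route as the paper: both reduce the statement to counting integer solutions of the dimension equation $\sum_{j=1}^d (n_j-1) - 2n = k$ under the standing conventions $d \geq 3$ and $n_j \geq 2$ (which the paper also imposes explicitly), and then run an elementary inequality chase. Your bookkeeping via $m_j = n_j - n$ is a bit more explicit than the paper's (which fixes $\alpha = n_2 - n_1$ and bounds $\sum_{j \geq 3}(n_j - 1) = k + 2 - \alpha$), in particular in extracting the clean bound $d \leq k+4$, but the underlying argument is the same.
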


\begin{proof}
By Theorem \ref{thm:main}, we seek the solutions of $n_1, \ldots, n_d$ with $n_j \geq 2$ and $d \geq 3$ to the equation
\begin{align}\label{solv}
\sum_{j = 1}^d (n_j - 1) - 2n = k
\end{align}
where $n = \text{min} \{ n_1, \ldots, n_d \}$. An odeco tensor of size $n_1 \times \cdots \times n_d$ will then have Type II singular vector tuples consisting of product of linear spaces of dimension $k$. Without loss of generality, we assume that $n_1 \leq \ldots \leq n_d$, and hence $n = n_1$. Let the constant $\alpha$ be such that $n_2 = n + \alpha$. For fixed $\alpha$, rearranging \eqref{solv} shows that we seek to solve the equation
\begin{align}
\sum_{j = 3}^d (n_j - 1) = k + 2 - \alpha .
\end{align}
This has finitely many solutions, since the right hand side is a fixed number, and each summand on the left hand side has strictly positive integer size. From the form of the right hand side, we see that there will be solutions for only finitely many values of $\alpha$. In conclusion, there are only finitely many size combinations $n_1 \times \cdots \times n_d$ which yield Type II singular vector tuples of dimension $k$.
\end{proof}

For example, odeco tensors whose Type II singular vector tuples constitute a zero-dimensional projective variety have possible sizes:
$$ \{ 2 \times 2 \times 2, 3 \times 3 \times 3 , 2 \times 2 \times 2 \times 2 \} .$$

Theorem \ref{thm:main} tells us how many singular vector tuples there are of Types I and II, which are entered in the first two columns of the table below. The number of singular vector tuples of a generic tensor of a given format is given by \cite[Theorem 1]{FO}, and this is entered into the last column of the table. We observe that odeco tensors whose Type II singular vector tuples consist solely of points attain the generic count.
 
\begin{center}
\begin{tabular}{| l | l | l | l | }
\hline
Tensor Size & Type I Count & Type II Count & Generic Count \\ \hline
$2 \times 2 \times 2$ & 6 & 0 & 6 \\ \hline
$3 \times 3 \times 3$ & 31 & 6 & 37 \\ \hline
$2 \times 2 \times 2 \times 2$ & 18 & 6 & 24 \\ \hline
\end{tabular}
\end{center}

Now we consider odeco tensors whose Type II singular vector tuples make a one-dimensional projective variety. They are of one of the following formats:
$$ \{ 2 \times 3 \times 3 , 2 \times 2 \times 4, 3 \times 3 \times 4, 4 \times 4 \times 4, 2 \times 2 \times 2 \times 3, 2 \times 2 \times 2 \times 2 \times 2 \}.$$
Their singular vector tuples consists of a finite collection of points (Type~I) and a collection of copies of $\mathbb{P}^1$ in the product of projective spaces $\mathbb{P}^{n_1 - 1} \times \ldots \times \mathbb{P}^{n_d - 1}$ (Type~II). When two copies of $\mathbb{P}^1$ meet, they do so at a triple intersection point. The data for these tensor formats is recorded in the table below. Under a small perturbation, each copy of $\mathbb{P}^1$ contributes one singular vector tuple, and two arise from each triple intersection. We observe that summing the Type I count, the number of copies of $\mathbb{P}^1$, and twice the number of triple intersections yields the generic count.

\begin{center}
\begin{tabular}{| l | l | l | l | l | }
\hline
Tensor Size & Type I Count & \#$\mathbb{P}^1$s & \#Triple Intersections & Generic Count \\ \hline
$2 \times 3 \times 3$ & 6 & 5 & 2 & 15 \\ \hline
$2 \times 2 \times 4$ & 6 & 2 & 0 & 8  \\ \hline
$3 \times 3 \times 4$ & 31 & 12 & 6 & 55 \\ \hline
$4 \times 4 \times 4$ & 156 & 36 & 24 & 240 \\ \hline
$2 \times 2 \times 2 \times 3$ & 18 & 12 & 6 & 42 \\ \hline
$2 \times 2 \times 2 \times 2 \times 2 $ & 50 & 30 & 20 & 120 \\ \hline
\end{tabular}
\end{center}

We explored the $2 \times 3 \times 3$ case in more detail in Example \ref{ex233}. In the $3 \times 3 \times 4$ and $2 \times 2 \times 2 \times 3$ cases the simplicial complexes of the Type II singular vector tuples are the same shape. They consist of the 12 copies of $\mathbb{P}^1$ meeting at six triple intersections pictured in Figure~\ref{fig:334}.

\begin{figure}[h]
\centering
   \includegraphics[width=6cm]{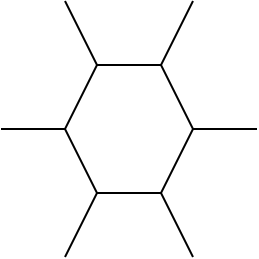}
  \caption{The 12 copies of $\mathbb{P}^1$ with six triple intersection points, for $3 \times 3 \times 4$ tensors and $2 \times 2 \times 2 \times 3$ tensors}
\label{fig:334}
\end{figure}

In the case of $2 \times 2 \times 2 \times 2 \times 2$ odeco tensors, we have 30 copies of $\mathbb{P}^1$ that meet at 20 triple intersection points as seen in the non-planar arrangement pictured in Figure~\ref{fig:22222}. In the case of $4 \times 4 \times 4$ odeco tensors, we have 36 copies of $\mathbb{P}^1$ meeting at 24 triple intersection points as pictured in Figure \ref{fig:444}.

\begin{figure}
\centering  \vspace{-2ex}
   \includegraphics[width=7cm]{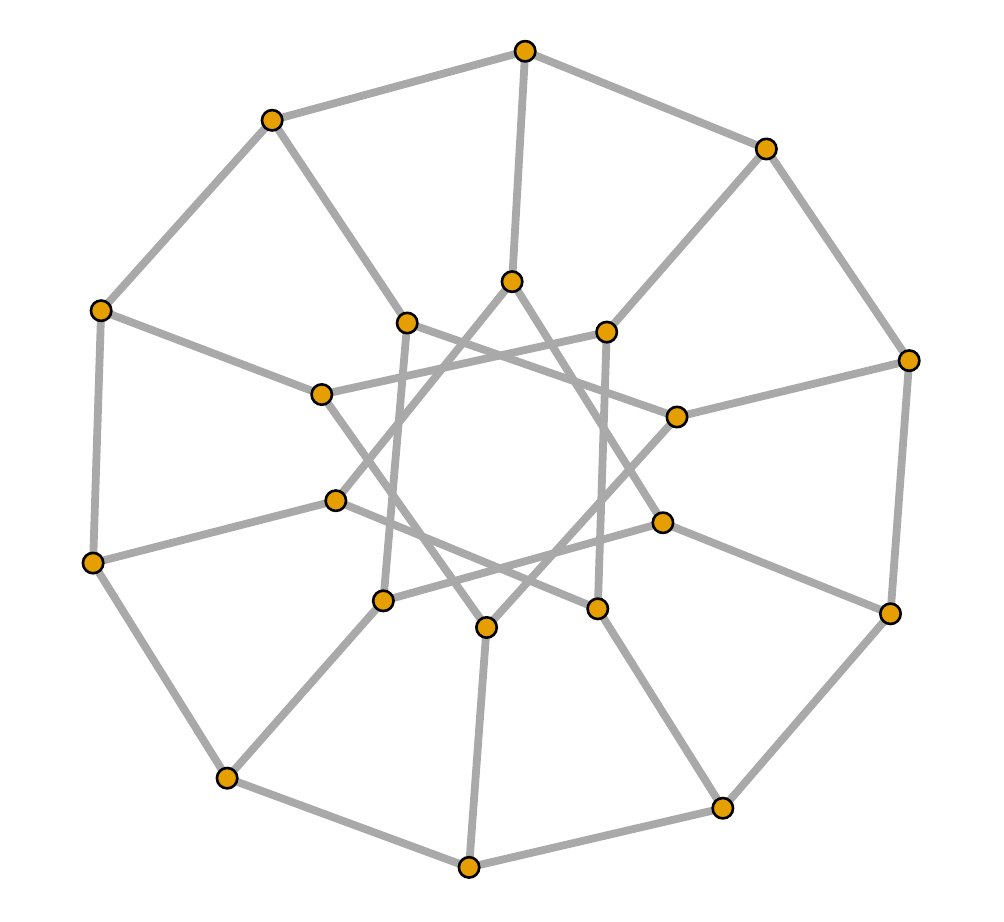}
  \caption{The 30 copies of $\mathbb{P}^1$ with 20 triple intersection points, for $2 \times 2 \times 2 \times 2 \times 2$ tensors}
\label{fig:22222}
\end{figure}

\begin{figure}[!h]
\centering
   \includegraphics[width=7cm]{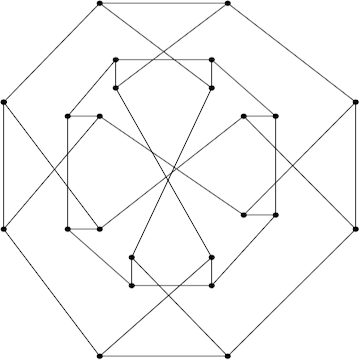}
  \caption{The 36 copies of $\mathbb{P}^1$ with 24 triple intersection points, for $4 \times 4 \times 4$ tensors}
\label{fig:444}
\end{figure}

We conclude this paper with a picture of an odeco tensor whose Type II singular vector tuples make a two-dimensional projective variety. The possible such formats are:
$$ \{ 2\times2\times2\times2\times2\times2, 2\times2\times2\times2\times3, 2\times2\times2\times4, 2\times2\times3\times3,$$
$$ 3\times3\times3\times3, 2\times2\times5, 3\times3\times5, 4\times4\times5, 5\times5\times5, 2\times3\times4, 3\times4\times4 \}.$$
We focus on those tensors of format $2 \times 2 \times 3 \times 3$. The number of components in this Type II configuration is 19. There are four copies of the projective plane $\mathbb P^2$ and 15 copies of $\mathbb P^1 \times \mathbb P^1$. The pieces are depicted in Figure \ref{fig:2233}. They are arranged around a central square, which is colored orange for ease of visibility. Each edge is a copy of $\mathbb P^1$ and its color represents the factor in which it occurs, in the order: red, yellow, blue, green. For example, green edges refer to copies of $\mathbb P^1$ of the form $\mathbb P^0 \times \mathbb P^0 \times \mathbb P^0 \times \mathbb P^1$. The configuration is not realizable in three-dimensional space: in this depiction the diagonally opposite blue and green triangles self-intersect. The generic count for the number of singular vector tuples of a tensor of this format is 98, by \cite{FO}, and the Type I singular vector tuples contribute 18 points. Therefore, this surface accounts for 80 points, which re-appear under a general perturbation of an odeco tensor of this format.
 
\begin{figure}[!h]
\centering
   \includegraphics[width=9cm]{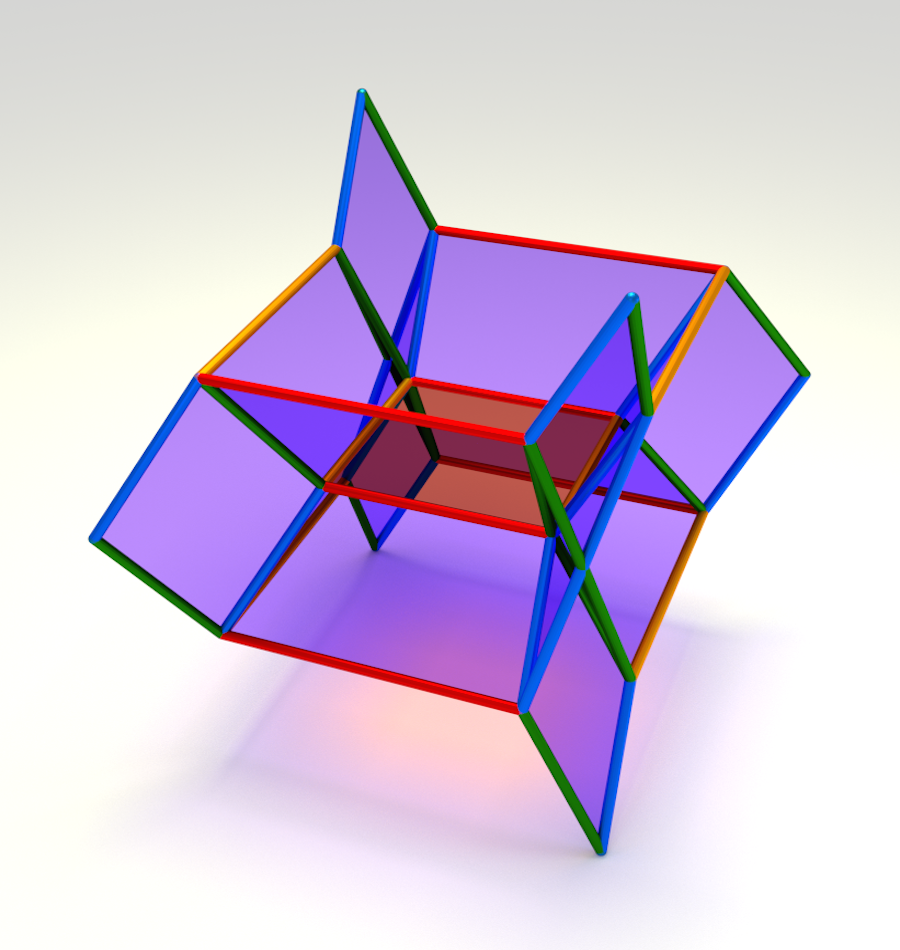}
  \caption{The arrangement of 19 two-dimensional facets, for $2 \times 2 \times 3 \times 3$ tensors}
\label{fig:2233}
\end{figure}

\subsection*{Acknowledgements}
We would like to thank our advisor Bernd Sturmfels for his support during this project. We would also like to thank Martin Helmer for reading an earlier draft of this paper. Figure \ref{fig:2233} was made by Thilo Roerig, to whom we are very grateful, using the open-source 3D graphics and animation software `Blender'.

Anna Seigal is supported by a Simons Fellowship in Mathematics, and Elina Robeva is supported by the UC Berkeley Mathematics Department.


\newcommand{\Addresses}{{
  \bigskip
  \footnotesize

  E.~Robeva, \textsc{775 Evans Hall, Department of Mathematics,
    University of California, Berkeley, Berkeley, CA94720}\par\nopagebreak
  \textit{E-mail address}, E.~Robeva: \texttt{erobeva@berkeley.edu}
  
  A.~Seigal, \textsc{1062 Evans Hall, Department of Mathematics,
    University of California, Berkeley, Berkeley, CA94720}\par\nopagebreak
  \textit{E-mail address}, A.~Seigal: \texttt{seigal@berkeley.edu}
}}

\Addresses

\end{document}